\documentclass[a4paper,12pt]{article}
\usepackage[top=2.5cm,bottom=2.5cm,left=2.5cm,right=2.5cm]{geometry}
\usepackage{cite,amsmath,amssymb}
    \usepackage[margin=1cm,%
                font=small,%
                format=hang,%
                labelsep=period,%
                labelfont=bf]{caption}
    \pagestyle{empty}

\usepackage[algoruled,linesnumbered]{algorithm2e}
\usepackage{algorithmic}
\newcommand{\Cl}{\rm Cl}

\usepackage{amsfonts,epsf,here,graphicx}
\usepackage{latexsym,multirow,rotating}
\usepackage{pgf,tikz,subfigure}
\usepackage{epstopdf}

\newtheorem{theorem}{\bf Theorem}[section]

\newtheorem{lemma}[theorem]{\bf Lemma}

\newtheorem{definition}[theorem]{\bf Definition}

\newcommand{\qed}{\hfill $\square$ \bigskip}

\textheight = 24cm
\textwidth = 16cm

\begin{document}

\baselineskip=0.30in
\vspace*{40mm}

\begin{center}
{\LARGE \bf On the Clar Number of Benzenoid Graphs}
\bigskip \bigskip

{\large \bf Nino Ba\v si\' c$^{a}$ \qquad Istv\'{a}n Est\'{e}lyi$^b$ \qquad Riste \v Skrekovski$^{c,d}$ \qquad Niko Tratnik$^e$
}
\bigskip\bigskip

\baselineskip=0.20in
$^a$\textit{Faculty of Mathematics, Natural Sciences and Information Technologies, University of Primorska, Slovenia} \\
{\tt nino.basic@famnit.upr.si}
\medskip

$^b$\textit{NTIS, University of West Bohemia, Czech Republic
}\\
{\tt estelyii@gmail.com}
\medskip

$^c$ \textit{Faculty of Mathematics and Physics, University of Ljubljana, Slovenia} \\
{\tt riste.skrekovski@fmf.uni-lj.si}
\medskip

$^d$ \textit{Faculty of information studies, Novo mesto, Slovenia} \\
\medskip

$^e$ \textit{Faculty of Natural Sciences and Mathematics, University of Maribor, Slovenia} \\
{\tt niko.tratnik@um.si}
\medskip

\bigskip\medskip

(Received \today)

\end{center}

\noindent
\begin{center} {\bf Abstract} \end{center}

\vspace{3mm}\noindent
A Clar set of a benzenoid graph $B$ is a maximum set of independent alternating hexagons over all perfect matchings of $B$. The Clar number of $B$, denoted by ${\Cl}(B)$, is the number of hexagons in a Clar set for $B$. In this paper, we first prove some results on the independence number of subcubic trees to study the Clar number of catacondensed benzenoid graphs. As the main result of the paper we prove an upper bound for the Clar number of catacondensed benzenoid graphs and characterize the graphs that attain this bound. More precisely, it is shown that for a catacondensed benzenoid graph $B$ with $n$ hexagons ${\Cl}(B) \leq [(2n+1)/3]$.




\section{Introduction}

The Clar number of a molecular graph $G$ (for example benzenoid graph, fullerene or carbon nanotube) is the maximum number of independent alternating hexagons over all perfect matchings of $G$. This concept originates from Clar's aromatic sextet theory \cite{clar} and has been studied in many papers for benzenoid graphs \cite{ha2,sa-gu} and fullerenes \cite{ahmadi,har,ye-zh,zh-ye}. Also, the connections between the Clar number and the Fries number (i.e. the maximum number of alternating hexagons over all perfect matchings) were investigated in \cite{gra,har1} and some relations to linear programming were considered in \cite{abeledo,ha}. In \cite{kl-zi-gu} an algorithm for computing the Clar number of a catacondensed benzenoid graph was proposed. 

Moreover, matchings and perfect matchings of a molecular graph play an important role in many fields of chemical graph theory. For example, they are essentially used in studying resonance graphs \cite{dos}, saturation number \cite{ahmadi2}, enumeration of matchings \cite{ewi}, Hosoya index \cite{hos2,hos}, Zhang-Zhang polynomial \cite{zhang_pol}, forcing and anti-forcing numbers \cite{shi}, internal Kekul\' e structures \cite{gra1}, etc. Furthermore, there are connections between resonance graphs and Clar sets \cite{sa-kl}.

In the present paper, we prove an upper bound for the Clar number of catacondensed benzenoid graphs and characterize the graphs that attain the bound. We proceed as follows. In the following section we first formally define all the important concepts. Since the problem of studying the Clar number of a catacondensed benzenoid graph can be transformed into the problem of studying the independence number of its dualist tree, in Section \ref{trees} we prove some results on the independence number of subcubic trees. Finally, in Section \ref{bound} we prove the upper bound and characterize all extremal graphs with respect to this bound.
\section{Preliminaries}

In the existing (both mathematical and chemical) literature, there is inconsistency in the terminology pertaining to (what we call
here) ``benzenoid graph". In order to avoid any confusion, we first define our objects.

A \textit{benzenoid graph} is a 2-connected graph
in which all inner faces are hexagons (and all hexagons are faces), such that two
hexagons are either disjoint or have exactly one common edge, and no three hexagons
share a common edge.

Note that in some literature it is assumed that a benzenoid graph can be embedded into the regular hexagonal lattice \cite{gucy-89}. Obviously, our definition is more general and includes graphs that cannot be embedded into the regular hexagonal lattice. For more details on these definitions see \cite{DGKZ-2002}.

Let $B$ be a benzenoid graph. A vertex shared by three hexagons of $B$ is called an \textit{internal} vertex of $B$. A benzenoid graph is said to be \textit{catacondensed} if it does not possess internal vertices. Otherwise it is called \textit{pericondensed}.

A \textit{matching} $M$ in a graph $G$ is a set of edges of $G$ such that no two edges from $M$ share a vertex. If every vertex of $G$ is incident with an edge of $M$, the matching $M$ is called a \textit{perfect matching} (in chemistry perfect matchings are known as \textit{Kekul\'{e} structures}). Let $B$ be a benzenoid graph and $h$ a hexagon of $B$. If $M$ is a matching that contains exactly 3 edges of $h$, then $h$ is an $M$-\textit{alternating} hexagon. In such cases we often draw a circle in $h$.

Let $B$ be a benzenoid graph. We say that some set of hexagons of $B$ is \textit{independent} (or that the hexagons from this set are independent) if these hexagons are pairwise disjoint. A \textit{Clar set} $C$ is a maximum set of independent $M$-alternating hexagons over all perfect matchings $M$ of $B$. If $C$ is a Clar set and $M$ a perfect matching such that every hexagon from $C$ is $M$-alternating, then we say that the perfect matching $M$ \textit{gives} Clar set $C$. The \textit{Clar number} of $B$, denoted by ${\Cl}(B)$, is the number of hexagons in a Clar set for $B$. It is easy to observe that a Clar set $C$ is a maximum set of independent hexagons such that the  graph obtained from $B$ by removing hexagons from $C$ (and all the edges incident to these hexagons) has a perfect matching.

An \textit{independent set} is a set of vertices in a graph $G$, no two of which are adjacent. A \textit{maximum independent set} is an independent set of largest possible cardinality for a given graph $G$. This cardinality is called the \textit{independence number} of $G$, and denoted by $\alpha(G)$.

A \textit{vertex cover} of a graph is a set of vertices such that each edge of the graph is incident to at least one vertex of the set. Moreover, a graph $G$ is called \textit{subcubic} if the degree of any vertex of $G$ is at most 3.

\section{Auxiliary  results on trees \label{trees}}

In this section some results about the independence number of subcubic trees are proved. These results will be used to establish the main result of the paper. First we prove the following lemma.
\begin{lemma}
\label{l.listi}
Every tree with more than one vertex has a maximum independent set which contains all the leaves.
\end{lemma}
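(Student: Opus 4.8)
The plan is to prove the lemma by a local exchange argument on maximum independent sets. Let $T$ denote the tree in question. Among all maximum independent sets of $T$, I would fix one, call it $S$, that contains as many leaves of $T$ as possible, and then argue that such an $S$ is forced to contain \emph{every} leaf of $T$.

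Suppose, for contradiction, that some leaf $\ell$ of $T$ satisfies $\ell\notin S$, and let $u$ be the unique neighbour of $\ell$. Since a maximum independent set is in particular a maximal one, $\ell$ must have a neighbour in $S$; as $u$ is its only neighbour, $u\in S$. Now set $S'=(S\setminus\{u\})\cup\{\ell\}$. This set is independent, because the only vertex of $T$ adjacent to $\ell$ is $u$, which has just been deleted, and clearly $|S'|=|S|$, so $S'$ is again a maximum independent set. But $S'$ contains $\ell$ together with every leaf that $S$ contained, losing at most $u$; and $u$ can be a leaf only if $\deg u=1$, which combined with $\deg \ell=1$ would force $T$ to be $K_2$. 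Hence, as soon as $T$ is not $K_2$, the maximum independent set $S'$ contains strictly more leaves than $S$, contradicting the choice of $S$. Therefore $S$ contains all leaves, which proves the lemma. (The graph $K_2$ is a genuine exception, since neither of its two one-element maximum independent sets contains both of its leaves, so in the degenerate two-vertex case the statement is to be understood for trees on at least three vertices, which is all that is needed later.)

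The only step I expect to require attention is the final bookkeeping: confirming that the swap from $S$ to $S'$ strictly increases the number of leaves in the chosen set, equivalently that the vertex $u$ removed from $S$ is an internal vertex rather than a leaf. This is precisely where the hypothesis on the number of vertices is used. Everything else is routine --- the maximality of a maximum independent set, and the observation that replacing a leaf's neighbour by the leaf itself preserves both independence and cardinality. An alternative proof by induction on $|V(T)|$, deleting a well-chosen leaf together with its support vertex, is possible too, but there one must also handle the deletion disconnecting $T$ when the support vertex has degree at least $3$ (as happens for double stars), so the exchange argument above seems preferable.
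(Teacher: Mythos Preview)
Your proof is correct and follows essentially the same exchange argument as the paper's: pick a maximum independent set containing as many leaves as possible, then swap an omitted leaf for its neighbour to reach a contradiction. You are in fact slightly more careful than the paper, which in its case ``$v\in I$'' asserts that the swap yields ``one more leaf'' without noting that $v$ could itself be a leaf (precisely the $K_2$ exception you flag); this oversight is harmless for the later applications, which concern trees on at least three vertices.
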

\begin{proof} 
Let $T$ be any tree and $I$ be an independent set of maximum size that contains as many leaves as possible. Suppose that there exists a leaf $u$  of $T$ such that $u\not\in I$.  Denote by $v$ the only neighbour of $u$. We consider the following two cases:
\begin{itemize}
\item if $v \in I$, then the set $(I\setminus \lbrace v \rbrace) \cup \lbrace u \rbrace$ is an independent set of the same size as $I$ and contains $u$, i.e. one more leaf than $I$,
\item if $v \not\in I$, then the set $I \cup \lbrace u \rbrace$ is an independent set of size bigger than the size of $I$. 
\end{itemize}
In both cases, we obtain a contradiction that establishes the lemma. \qed
\end{proof}

\noindent In the next lemma an upper bound for the independence number is shown.

\begin{lemma}
\label{1.istvan}
Let $T$ be a subcubic tree on $n \geq 1$ vertices with independence number $\alpha(T)$. Then
$$\alpha(T) \leq \left[ \frac{2n+1}{3} \right].$$
\end{lemma}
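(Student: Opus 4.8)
The plan is to estimate $\alpha(T)$ through the complementary vertex cover together with a one-line edge count. Let $I$ be a maximum independent set of $T$ and put $S = V(T) \setminus I$, so $|S| = n - \alpha(T)$ and $S$ is a vertex cover of $T$: every edge has at least one endpoint in $S$. Since $T$ is a tree, $|E(T)| = n - 1$, and since $T$ is subcubic each vertex of $S$ is incident with at most three edges. Counting incidences of edges with $S$ (each edge counted at least once, as $S$ covers all edges) gives
$$ n - 1 = |E(T)| \le \sum_{v \in S} \deg_T(v) \le 3|S| = 3\bigl(n - \alpha(T)\bigr), $$
which rearranges to $\alpha(T) \le (2n+1)/3$; as $\alpha(T)$ is an integer, $\alpha(T) \le [(2n+1)/3]$. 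The degenerate case $n = 1$ is covered automatically ($S = \emptyset$ and $1 \le [1]$).

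I do not anticipate a genuine obstacle here: the whole proof is this counting estimate on a minimum vertex cover of a subcubic graph. The only point deserving a line of care is the passage to the integer part — writing $n - 1 = 3q + r$ with $r \in \{0,1,2\}$ one verifies in each of the three residue classes that $n - \lceil (n-1)/3 \rceil = [(2n+1)/3]$, which also shows the bound is best possible, being attained e.g. by $K_{1,3}$ when $n = 4$.

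If a more self-contained argument within this section is preferred, the same bound follows by induction on $n$ using Lemma \ref{l.listi}. For $n \ge 2$ pick a leaf $u$ with neighbour $v$; since some maximum independent set contains all leaves, one gets $\alpha(T) = 1 + \alpha(T - \{u,v\})$, and because $\deg_T(v) \le 3$ the forest $T - \{u,v\}$ has at most two components and $n-2$ vertices in total, so the bound for $T$ follows from the inductive bounds for the components after a short arithmetic check. I would, however, present the direct counting argument, since it is shorter and needs nothing beyond subcubicity and the tree's edge count.
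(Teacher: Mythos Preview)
Your proof is correct and is essentially the same as the paper's: both bound the size of a vertex cover via the edge count $n-1 \le 3\,|S|$ in a subcubic tree and rearrange. The only cosmetic difference is that the paper takes a \emph{minimum} vertex cover and invokes the Gallai identity $\alpha(T)+\tau(T)=n$, whereas you take $S = V(T)\setminus I$ directly (which amounts to the same thing); your extra remarks on the integer part and the inductive alternative are fine but unnecessary for the lemma as stated.
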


\begin{proof} Let ${\rm VC}$ be a vertex cover of smallest size and let $|{\rm VC}|=\tau(T)$. Since any edge of $T$ is incident with at least one vertex from  ${\rm VC}$, we have $|E(T)| \leq \sum_{v\in {\rm VC}}\deg(v)$. Moreover, $|E(T)| = n-1$, since $T$ is a tree, and $\sum_{v\in {\rm VC}}\deg(v) \leq 3\tau(T)$ since $T$ is subcubic. Hence, $\tau(T) \geq\frac{n-1}{3}$. Substituting this to the Gallai identity $\alpha(T)+\tau(T)=n$ (see \cite{gallai}) we get $\alpha(T)+\frac{n-1}{3}\leq n $, which can be rearranged to the desired form. \qed
\end{proof}

%

%
%
%

\noindent To state the main result of this section, one definition is needed.

\begin{definition} \label{drevesa}
Let $k \geq 2$ be an integer. The tree $T_k$ is composed of the path on vertices $v_1,\ldots,v_{2k+1}$ with $k-2$ additional leaves which are attached to the vertices $v_4, v_6, \ldots, v_{2k-2}$ (see Figure \ref{tree1}). 
\end{definition}

\begin{figure}[!htb]
	\centering
		\includegraphics[scale=0.7, trim=0cm 0cm 0cm 0cm]{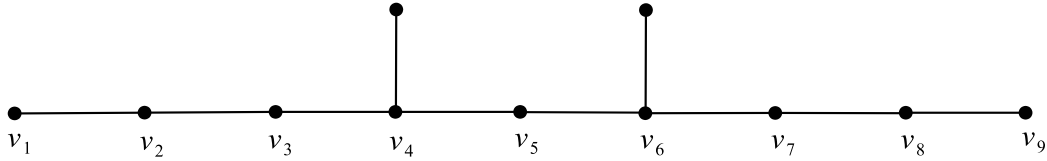}
\caption{Tree $T_4$ from Definition \ref{drevesa}.}
	\label{tree1}
\end{figure}

\noindent In the following theorem we investigate subcubic trees for which the independence number attains the upper bound from Lemma \ref{1.istvan}.

\begin{theorem}
\label{glavna}
Let $T$ be a subcubic tree on $n\ge 3$ vertices with independence number $\left[ \frac{2n+1}{3} \right]$. Then exactly one of the following statements holds:
\begin{itemize}
\item [(i)] $T$ is a tree $T_k$ with $k \geq 2$.
\item [(ii)] $T$ has a vertex that is adjacent to (at least) two leaves.
\end{itemize}

\end{theorem}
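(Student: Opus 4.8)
The plan is to choose, using Lemma \ref{l.listi}, a maximum independent set $I$ of $T$ that contains \emph{every} leaf, to set $S = V(T)\setminus I$ (a minimum vertex cover, of the size forced by the hypothesis via Gallai's identity), and to squeeze structural information out of the near-equality in the bound of Lemma \ref{1.istvan}. Since $S$ is a vertex cover and $I$ is independent, no edge lies inside $I$; writing $e_S$ for the number of edges inside $S$, this gives $\sum_{v\in S}\deg v = (n-1)+e_S$ and hence $\sum_{v\in I}\deg v = (n-1)-e_S$. Because $T$ is subcubic and all leaves lie in $I$, every vertex of $I$ has degree $1,2$ or $3$ while every vertex of $S$ has degree $2$ or $3$. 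Counting the vertices of $I$ by degree and combining with the two sums above yields a clean expression for the number of leaves $\ell$ of $T$, namely
$$\ell = 2\alpha(T) - (n-1) + e_S + n_3,$$
where $n_3$ is the number of degree-$3$ vertices in $I$.

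Next I would invoke the failure of (ii). If no vertex of $T$ is adjacent to two leaves, then the map sending each leaf to its unique neighbour (which lies in $S$, as $I$ is independent) is injective, so $\ell \le |S| = n-\alpha(T)$. Substituting the displayed formula for $\ell$ and the given value $\alpha(T) = [(2n+1)/3]$ turns this into $3\alpha(T) \le 2n-1-e_S-n_3$; a short case analysis on $n \bmod 3$ shows that this is impossible when $n\equiv 0$ or $n\equiv 1\pmod 3$, and that when $n\equiv 2\pmod 3$ it can only hold with $e_S=0$, $n_3=0$ and $\ell=|S|$. Hence, assuming (ii) fails, we are forced into $n=3m+2$ with $S$ an independent set, $I$ containing no vertex of degree $3$, and every vertex of $S$ carrying exactly one leaf-neighbour.

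It then remains to reconstruct $T$ from this rigid local picture. With $n=3m+2$ we have $|S|=m+1$, and $I$ consists of $m+1$ leaves together with $m$ vertices of degree $2$ (each of whose two neighbours lies in $S$). Deleting the $m+1$ leaves of $T$ leaves a tree $T''$ on $2m+1$ vertices in which each degree-$2$ vertex of $I$ still has degree $2$, while each vertex $s\in S$ has degree $\deg_T(s)-1\in\{1,2\}$; a degree count forces exactly two vertices of degree $1$ in $T''$, so $T''$ is a path. Since $S$ and the degree-$2$ part of $I$ are each independent and have sizes $m+1$ and $m$, this path alternates between the two classes and has both endpoints in $S$. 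Re-attaching the pendant leaf hanging at each of the $m+1$ vertices of $S$ then reproduces precisely the tree of Definition \ref{drevesa} with $k=m+1\ge 2$, i.e. statement (i) (the degenerate case $m=1$ gives $T=P_5=T_2$). To finish with ``exactly one'', I would verify directly from the description of $T_k$ that none of its vertices is adjacent to two leaves, so (i) and (ii) cannot both hold.

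I expect the main obstacle to be this last reconstruction: converting the numerical equalities $e_S=n_3=0$ and $\ell=|S|$ into the exact combinatorial shape of $T_k$, in particular arguing cleanly that $T$ minus its leaves is a path and that both the alternation of $S$ with the degree-$2$ vertices and the placement of the pendant leaves match Definition \ref{drevesa}. The degree-counting identities are routine bookkeeping, but care is needed to ensure the chosen $I$ genuinely contains all leaves (so that $S$ has minimum degree $2$), since this underpins both those identities and the injectivity argument.
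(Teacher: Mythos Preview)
Your argument is correct and takes a genuinely different route from the paper. The paper deletes all leaves \emph{together with their neighbours} to obtain a forest $T'$, bounds $\alpha(T')$ component-by-component via Lemma~\ref{1.istvan}, and combines $\alpha(T)=\alpha(T')+\ell$ with estimates on the component count to force first $\ell_1=2$ (where $\ell_1$ counts leaf-neighbours of degree $2$) and then, through a second round of inequalities involving isolated vertices of $T'$, that every component of $T'$ is a single vertex. You instead stay inside $T$: working with the minimum vertex cover $S=V(T)\setminus I$ and one degree-sum identity, you obtain the exact formula $\ell=2\alpha(T)-(n-1)+e_S+n_3$, feed in $\ell\le |S|$, and let the residue of $n$ modulo $3$ finish the job. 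This is shorter and makes transparent the fact (only implicit in the paper) that failure of (ii) forces $n\equiv 2\pmod 3$. The reconstruction step---deleting only the leaves, counting degrees to see that $T''$ is a path, and using that $S$ and $I\setminus\{\text{leaves}\}$ are both independent to get the alternation---is essentially the same in both proofs, though you reach it with less auxiliary notation. Your explicit verification that $T_k$ has no vertex adjacent to two leaves, supplying the ``exactly one'' clause, is also a small addition over the paper, which only argues the ``at least one'' direction.
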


\begin{proof}
 Suppose that $T$ satisfies the conditions of the theorem and that $T$ does not have a vertex adjacent to (at least) two leaves. Denote by $\alpha (T)$ the independence number of $T$. Moreover, denote by $\ell$ the number of leaves of $T$. 
Each of these leaves has a unique neighbour, and by assumption all these neighbours are pairwise distinct. We will denote the set of all these neighbours by $S$. Moreover, let $\ell_1$ be the number of vertices in the set $S$ with degree two and let $\ell_2$ be the number of vertices in the set $S$ with degree three. Now, let $T'$ be the forest obtained from $T$ by removing all the leaves and their neighbours. Note that $T'$ has $n'=n-2\ell$ vertices and denote by $r$ the number of connected components of $T'$. It is obvious that $r \leq \ell_2 + 1$. We will denote these components by $C_1, \ldots, C_{r}$ (see Figure \ref{tree2}). Also, let $|V(C_i)| = n_i$ for any $i \in \lbrace 1, \ldots, r \rbrace$.

\begin{figure}[!htb]
	\centering
		\includegraphics[scale=0.9, trim=0cm 0cm 0cm 0cm]{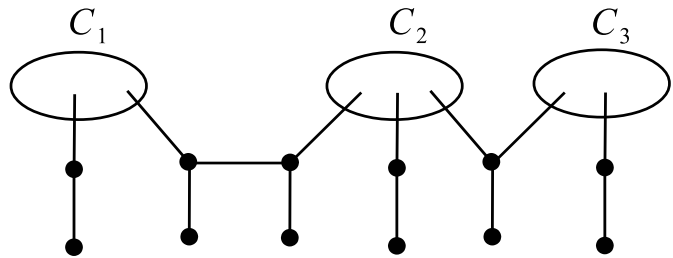}
\caption{Tree $T$ with $\ell = 6$, $\ell_1 = 3$, $\ell_2 = 3$ and connected components $C_1,C_2,C_3$ of $T'$.}
	\label{tree2}
\end{figure}

Since any $C_i$ is a tree, it follows by Lemma \ref{1.istvan} that
$$\alpha(C_i) \leq \frac{2n_i + 1}{3}$$
for any $i \in \lbrace 1, \ldots, r \rbrace$. Hence, we obtain
$$\alpha(T') \leq \sum_{i=1}^{r} \frac{2n_i + 1}{3} = \frac{2n' + r}{3} \leq \frac{2n' + \ell_2 + 1}{3}.$$
Moreover, by Lemma \ref{l.listi} it is easy to observe that
\begin{equation} \label{zveza}
\alpha(T) =\alpha(T') + \ell.
\end{equation}
Therefore, we have
$$\left[ \frac{2n+1}{3} \right] \leq \frac{2n' + \ell_2 + 1}{3} + \ell = \frac{2n - \ell + \ell_2 + 1}{3}.$$
Since $\ell_1 = \ell - \ell_2$, we get
\begin{equation} \label{form}
\left[ \frac{2n+1}{3} \right] \leq \frac{2n - \ell_1 + 1}{3}.
\end{equation}

In the following we first show that $\ell_1 \geq 2$. Let $T''$ be a tree obtained from $T$ by removing all the leaves of $T$. Obviously, $T''$ has more than one vertex (otherwise $T$ has less than three vertices or has a vertex with two leaves, which gives a contradiction in both cases) and therefore, it has at least two leaves. Since it is easy to see that the number of leaves in $T''$ is exactly $\ell_1$, it follows that $\ell_1 \geq 2$. If $\ell_1 \geq 3$, we obtain a contradiction with (\ref{form}) and therefore, $\ell_1 = 2$.

Next, we show that any connected component of $T'$ is a path. Since we already know that the number of leaves in $T''$ is exactly $\ell_1$, it follows that $T''$ has exactly two leaves. Therefore, $T''$ is a path. Since $T'$ is obtained from $T''$ by removing some vertices, it is obvious that every connected component of $T'$ is a path.

Suppose that $T'$ has $m$ isolated vertices ($0 \leq m\leq r \leq \ell_2 + 1$). Therefore, it has $r - m$ connected components isomorphic to a path on more than one vertex. It is obvious that $\alpha(\overline{K_m})=m$, where $\overline{K_m}$ denotes the empty graph on $m$ vertices, and  $\alpha(P_j) \leq 2j/3$ for any $j \geq 2$, where $P_j$ denotes the path on $j$ vertices. Hence, we obtain

$$\alpha(T') \leq m + \frac{2}{3}(n - 2\ell - m) = \frac{2n - 4\ell + m}{3}$$
and by (\ref{zveza}) it follows
$$ \left[ \frac{2n+1}{3} \right] \leq \frac{2n - \ell + m}{3}. $$

\noindent
Obviously, if $\ell \geq m + 2$ we obtain a contradiction with the previous inequality and therefore, $\ell \leq m+1$. Since $m \leq \ell_2 + 1 = \ell - 1$ we also get $\ell \geq m+1$ and we deduce $\ell=m+1$. Hence, $T'$ has $m= \ell - 1 = \ell_2 + 1$ isolated vertices and since the number of connected components of $T'$ is at most $\ell_2 +1$, the graph $T'$ has exactly $\ell_2+1$ connected components and all of them are isolated vertices. Therefore, $T''$ is a path on $2\ell -1$ vertices and finally, we can conclude that the original tree $T$ can be obtained from a path of length $2\ell-1$ by attaching a leaf to every second vertex, starting with the first one, i.e., $T$ is isomorphic to $T_\ell$. Hence, the case $(i)$ of the theorem holds and the proof is complete. \qed
\end{proof}


\begin{lemma}
\label{enolicno}
Let $k \geq 2$. Then for the tree $T_k$ the independence number attains the upper bound from Lemma \ref{1.istvan}. Moreover, the maximum independent set is unique and contains vertices $v_1, v_3, \ldots, v_{2k-1}, v_{2k+1}$ and all the additional leaves.
\end{lemma}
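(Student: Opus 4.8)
The plan is to first confirm that the bound is attained and then to pin down the extremal set by a clique–cover argument. Write $n$ for the number of vertices of $T_k$: the path contributes $2k+1$ vertices and the pendant leaves contribute $k-2$, so $n = 3k-1$, whence $\left[\frac{2n+1}{3}\right] = \left[\frac{6k-1}{3}\right] = 2k-1$. Next I would exhibit the set $D$ consisting of the odd‑indexed path vertices $v_1, v_3, \ldots, v_{2k+1}$ together with all $k-2$ additional leaves. It is immediate that $D$ is independent (consecutive odd‑indexed path vertices are non‑adjacent, and every additional leaf hangs from an even‑indexed vertex, which is not in $D$) and that $|D| = (k+1) + (k-2) = 2k-1$. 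Combined with Lemma~\ref{1.istvan} this gives $\alpha(T_k) = 2k-1 = \left[\frac{2n+1}{3}\right]$, so the bound is attained.

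For uniqueness the key step is to produce a partition of $V(T_k)$ into exactly $2k-1$ cliques; since $T_k$ is triangle‑free, each such clique is a single vertex or an edge. I would take the two ``end'' edges $v_1 v_2$ and $v_{2k} v_{2k+1}$, the $k-2$ pendant edges $\{v_{2i}, u_{2i}\}$ for $i = 2, \ldots, k-1$ (with $u_{2i}$ the leaf attached to $v_{2i}$), and the $k-1$ singletons $\{v_3\}, \{v_5\}, \ldots, \{v_{2k-1}\}$. One checks that every vertex of $T_k$ lies in exactly one of these $2 + (k-2) + (k-1) = 2k-1$ parts. Any independent set meets each part in at most one vertex, so a maximum independent set $I$, which has $2k-1$ vertices by the previous paragraph, must meet every part in exactly one vertex. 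In particular $I$ contains the $k-1$ singletons $v_3, v_5, \ldots, v_{2k-1}$.

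From here the rest of $I$ is forced. Each $v_{2j+1}\in I$ excludes its neighbours, so $v_2, v_4, \ldots, v_{2k} \notin I$; to meet each pendant edge $\{v_{2i}, u_{2i}\}$ the leaf $u_{2i}$ must then lie in $I$; and to meet the two end edges, with $v_2, v_{2k} \notin I$, the vertices $v_1$ and $v_{2k+1}$ must lie in $I$. Hence $I = D$, so the maximum independent set is unique and equals the set named in the statement. The only care needed is bookkeeping — verifying that the proposed clique partition really covers every vertex exactly once for all $k \ge 2$, including the degenerate case $k = 2$, where there are no pendant leaves, $T_2 = P_5$, and the partition reads $\{v_1, v_2\}, \{v_4, v_5\}, \{v_3\}$ — but there is no substantive obstacle; the argument is elementary throughout.
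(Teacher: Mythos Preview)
Your argument is correct. The first half (verifying $\alpha(T_k)=2k-1=\left[\frac{2n+1}{3}\right]$) matches the paper's proof essentially verbatim. For uniqueness, however, you take a genuinely different route. The paper argues by contradiction: it assumes a second maximum independent set $I'$ exists, picks $u\in I'\setminus I$ (necessarily one of the even-indexed path vertices), deletes $u$ together with its neighbours, and compares $|I'|$ with $|I|$ via the independence numbers of the resulting components, obtaining a numerical contradiction in each of two cases ($u$ of degree~3, or $u\in\{v_2,v_{2k}\}$). Your clique-cover argument is more structural: by exhibiting a partition of $V(T_k)$ into exactly $2k-1$ vertices-or-edges, you force any maximum independent set to meet every block, and the singleton blocks $\{v_3\},\ldots,\{v_{2k-1}\}$ then propagate the constraints to determine the rest of $I$ uniquely. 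Your approach avoids case analysis and the implicit check that $I\cap V(C_j)$ is optimal in each component; it also makes transparent \emph{why} uniqueness holds (the clique cover is tight). The paper's approach, on the other hand, requires no auxiliary construction. Both are short and elementary; your version is arguably the cleaner of the two.
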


\begin{proof}
Let $I$ be the set of vertices $v_1, v_3, \ldots, v_{2k-1}, v_{2k+1}$ and all the additional leaves. It is easy to check that $I$ contains $2k-1$ vertices and that $T_k$ has exactly $3k-1$ vertices. Therefore,
$$\alpha(T_k)= 2k-1 = \left[ \frac{2(3k-1) + 1}{3} \right]$$
and we have shown that $I$ is the maximum independent set and that for the tree $T_k$ the independence number attains the upper bound from Lemma \ref{1.istvan}.

\begin{figure}[!htb]
	\centering
		\includegraphics[scale=0.7, trim=0cm 0cm 0cm 0cm]{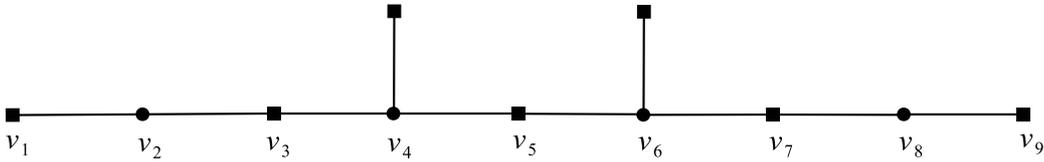}
\caption{Tree $T_4$. Vertices in $I$ are denoted with a square.}
	\label{trees_in}
\end{figure}

To prove the second part, suppose that $I'$ is another maximum independent set in $T_k$. Since $I' \neq I$ there is some $u \in I'$ such that $u \notin I$. We consider two cases.
\begin{itemize}
\item [(a)] \textit{$u$ is a vertex of degree 3.} Let $T_k'$ be the graph obtained from $T_k$ by removing $u$ and all its neighbours. Obviously, $T_k'$ has two connected components and we will denote them by $C_0$ and $C_1$. Let $I_j = I \cap V(C_j)$ for $j \in \lbrace 0, 1\rbrace$. It is easy to check that $I_j$ is a maximum independent set for $C_j$ where $j \in \lbrace 0, 1\rbrace$. Therefore, $|I'| = 1 + \alpha(C_0) + \alpha(C_1)$. On the other hand, we know that $|I| = 3 + \alpha(C_0) + \alpha(C_1)$. Since $I'$ and $I$ are both maximum independent sets, it holds $|I| = |I'|$ and we get a contradiction.
\item [(b)] \textit{$u=v_2$ or $u = v_{2k}$.} Let $T_k'$ be the graph obtained from $T_k$ by removing $u$ and all its neighbours. Let $I'' = I \cap V(T_k')$.  It is easy to check that $I''$ is a maximum independent set for $T_k'$. Therefore, $|I'| = 1 + \alpha(T_k')$. On the other hand, we know that $|I| = 2 + \alpha(T_k')$. Since $I'$ and $I$ are both maximum independent sets, it holds $|I| = |I'|$ and we get a contradiction.

\end{itemize}
\noindent
Since we get a contradiction in every case it follows that $I$ is the unique maximum independent set for $T_k$. \qed
\end{proof}

\section{Catacondensed benzenoid graphs with large Clar number}
\label{bound}

In this section we prove an upper bound for the Clar number of catacondensed benzenoid graphs and characterize those graphs that attain this bound. 

The \textit{dualist} graph of a given benzenoid graph $B$ consists of vertices corresponding to hexagons of $B$; two vertices are adjacent if and only if the corresponding hexagons have a common edge.
Obviously, the dualist graph of $B$ is a tree if and only if $B$ is catacondensed. If $B$ has $n$ hexagons, then this tree has $n$ vertices and none of its vertices have degree greater than 3 (so it is a subcubic tree). For a catacondensed benzenoid graph $B$ we will denote its dualist tree by $T(B)$. For an example see Figure \ref{dual}.

\begin{figure}[!htb]
	\centering
		\includegraphics[scale=0.7, trim=0cm 0cm 0cm 0cm]{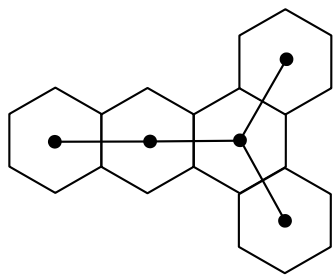}
\caption{Benzenoid graph $B$ with the dualist tree $T(B)$.}
	\label{dual}
\end{figure}

\begin{lemma} \label{zg_clar}
Let $B$ be a catacondensed benzenoid graph with $n$ hexagons. Then
$${\Cl}(B) \leq \left[ \frac{2n+1}{3} \right].$$
\end{lemma}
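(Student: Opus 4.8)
The plan is to deduce the bound directly from Lemma~\ref{1.istvan} by passing to the dualist tree $T(B)$. The key point is that a Clar set is, in particular, a set of pairwise disjoint hexagons, and in a benzenoid graph such sets correspond exactly to independent vertex sets of the dualist graph; so the $n$-vertex subcubic tree $T(B)$ does all the work.

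First I would record a small fact about benzenoid graphs: two distinct hexagons that share a vertex must share an edge. This is immediate from the definition, which permits two hexagons only to be disjoint or to meet in exactly one common edge. Consequently, a collection $\mathcal{H}$ of hexagons of $B$ is independent (pairwise disjoint) precisely when no two of its hexagons share an edge, and by the definition of the dualist graph this is exactly the statement that the set of vertices of $T(B)$ corresponding to $\mathcal{H}$ is an independent set of $T(B)$. Thus every set of independent hexagons of $B$ yields an independent set of the same size in $T(B)$ (and conversely); since a Clar set $C$ is by definition a set of independent hexagons, we obtain ${\Cl}(B) = |C| \le \alpha(T(B))$.

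Finally, because $B$ is catacondensed with $n$ hexagons, $T(B)$ is a subcubic tree on $n$ vertices, so Lemma~\ref{1.istvan} gives $\alpha(T(B)) \le \left[ \frac{2n+1}{3} \right]$. Combining the two inequalities yields ${\Cl}(B) \le \left[ \frac{2n+1}{3} \right]$, as claimed. No step here is a genuine obstacle; the only point deserving a moment's care is the translation between disjointness of hexagons and non-adjacency in the dualist tree, i.e.\ excluding two hexagons that meet in a single vertex — which, as noted above, cannot occur in a benzenoid graph.
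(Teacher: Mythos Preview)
Your argument is correct and follows essentially the same route as the paper: pass to the dualist tree $T(B)$, observe that a Clar set gives an independent set of vertices there, and invoke Lemma~\ref{1.istvan} on the subcubic tree $T(B)$. The only difference is that you spell out why pairwise disjoint hexagons correspond to non-adjacent vertices in $T(B)$, which the paper simply calls obvious.
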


\begin{proof}
Let $C$ be a Clar set for $B$ and let $T(B)$ be a dualist tree of $B$. Obviously, the vertices corresponding to the hexagons of $C$ form an independent set in $T(B)$. Therefore, by Lemma \ref{1.istvan} we have
$${\Cl}(B) \leq \alpha(T(B)) \leq \left[ \frac{2n+1}{3} \right].$$
\qed
\end{proof}

A hexagon $h$ of a benzenoid graph $B$ adjacent to exactly two other hexagons possesses two vertices of degree 2. If these two vertices are adjacent, then $h$ is \textit{angularly connected}, for short we say that $h$ is \textit{angular}. If these two vertices are
not adjacent, then $h$ is \textit{linearly connected}, and we say that $h$ is \textit{linear}.

\noindent
To characterize graphs that attain the upper bound we need the following lemma. 

\begin{lemma} \label{pom1}
Let $B$ be a catacondensed benzenoid graph with $n$ hexagons such that $T(B) \simeq T_k$ for some $k \geq 2$. Then ${\Cl}(B) = \left[ \frac{2n+1}{3} \right]$ if and only if the two hexagons corresponding to vertices $v_2$ and $v_{2k}$ are both angular.
\end{lemma}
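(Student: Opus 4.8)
The plan is to translate the statement into a question about perfect matchings of a tiny auxiliary graph. Set $n = |V(T_k)| = 3k-1$, so $\left[\frac{2n+1}{3}\right] = 2k-1$. By Lemma~\ref{zg_clar} we always have ${\Cl}(B) \le \alpha(T(B)) = \alpha(T_k)$, and by Lemma~\ref{enolicno} this equals $2k-1$ and is attained only by the unique maximum independent set $I = \{v_1, v_3, \dots, v_{2k+1}\}$ together with all additional leaves. Hence ${\Cl}(B) = 2k-1$ holds if and only if $B$ has a Clar set of size $2k-1$, and any such Clar set must consist exactly of the set $\mathcal{H}_I$ of hexagons of $B$ corresponding to the vertices of $I$. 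So it suffices to decide when $\mathcal{H}_I$ is a resonant set, i.e.\ when $B$ has a perfect matching $M$ for which every hexagon of $\mathcal{H}_I$ is $M$-alternating.

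Next I would use the standard reduction for resonant sets in catacondensed graphs. The hexagons of $\mathcal{H}_I$ are pairwise vertex-disjoint (independent hexagons in a benzenoid graph share no vertex), and an $M$-alternating hexagon has all six of its vertices matched inside it. Therefore $\mathcal{H}_I$ is a resonant set if and only if the graph $B' := B - V(\mathcal{H}_I)$, obtained by deleting from $B$ all vertices lying in some hexagon of $\mathcal{H}_I$, has a perfect matching: the forward direction is the restriction of $M$ to $V(B')$, and for the converse one combines a perfect matching of $B'$ with an arbitrarily chosen alternating triple of edges in each hexagon of $\mathcal{H}_I$.

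Then I would identify $B'$ explicitly. A vertex of $B$ survives in $B'$ precisely when it lies in no hexagon of $\mathcal{H}_I$. Degree-$3$ vertices of $B$ are exactly the vertices shared by two hexagons, so they correspond to edges of $T_k$; since $I$ is independent and contains every leaf of $T_k$, no edge of $T_k$ has both endpoints outside $I$, so every degree-$3$ vertex of $B$ is deleted. A degree-$2$ vertex of $B$ lies in a single hexagon, and survives iff that hexagon corresponds to a vertex of $T_k$ outside $I$, i.e.\ to one of $v_2, v_4, \dots, v_{2k}$. Among these, a hexagon corresponding to a degree-$3$ vertex of $T_k$ (one of $v_4, \dots, v_{2k-2}$) has all six of its vertices shared --- by the no-internal-vertex property its three common edges are pairwise disjoint --- hence contributes no surviving vertex, whereas the two degree-$2$ hexagons, $h_2$ (corresponding to $v_2$) and $h_{2k}$ (corresponding to $v_{2k}$), each have exactly two degree-$2$ vertices. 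Thus $V(B')$ consists of the two degree-$2$ vertices of $h_2$ and the two degree-$2$ vertices of $h_{2k}$ --- four vertices in all, in agreement with $|V(B)| - 6(2k-1) = (4n+2) - 6(2k-1) = 4$.

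Finally I would read off the answer. Every edge of a benzenoid graph lies on some hexagon, and each surviving vertex lies in only one hexagon; hence the only possible edges of $B'$ are the edge joining the two degree-$2$ vertices of $h_2$ and the edge joining the two degree-$2$ vertices of $h_{2k}$, and by definition these are present exactly when $h_2$, respectively $h_{2k}$, is angular. Consequently $B'$ (on four vertices) has a perfect matching if and only if both $h_2$ and $h_{2k}$ are angular, which by the equivalences established above is precisely the condition ${\Cl}(B) = \left[\frac{2n+1}{3}\right]$. I expect the only delicate point to be the bookkeeping in the third paragraph --- verifying that $B'$ really has no vertices beyond the four degree-$2$ vertices of $h_2$ and $h_{2k}$ and no long-range edge between $h_2$ and $h_{2k}$ --- while the rest is routine once the reduction to a perfect matching of $B'$ is in place.
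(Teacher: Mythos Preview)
Your proof is correct and follows essentially the same logic as the paper's: both use Lemma~\ref{enolicno} to pin down the unique candidate Clar set $\mathcal{H}_I$ and then reduce the question to whether the two degree-$2$ vertices of $h_2$ (and of $h_{2k}$) can be matched, which happens exactly when those hexagons are angular. You package this via the criterion that $\mathcal{H}_I$ is resonant iff $B - V(\mathcal{H}_I)$ has a perfect matching and compute that four-vertex leftover graph in one stroke, whereas the paper handles the two implications separately---an explicit construction of $M$ for one direction and a direct ``two unmatched vertices'' contradiction for the other---so the difference is one of presentation rather than substance.
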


\begin{proof}
First suppose that the two hexagons corresponding to vertices $v_2$ and $v_{2k}$ are both angular and we denote these hexagons by $h$ and $h'$. It is obvious that any vertex of $T_k$ different from $v_2$ and $v_{2k}$ is either in the unique maximum independent set or has degree 3. We need to find a perfect matching $M$ for $B$ with exactly $\left[ \frac{2n+1}{3} \right]$ independent $M$-alternating hexagons. Let $M'$ be a matching containing exactly 3 edges from all the hexagons that correspond to the vertices in the unique maximum independent set of $T_k$. Moreover, let $e$ and $e'$ be the edges of $h$ and $h'$, respectively, with both end-vertices of degree 2. Finally, we define $M = M' \cup \lbrace e,e' \rbrace$ (see Figure \ref{ben_graf}). It is easy to check that $M$ is a perfect matching for $B$ and by Lemma \ref{enolicno} it has exactly $ \left[ \frac{2n+1}{3} \right]$ independent $M$-alternating hexagons. Therefore, ${\Cl}(B) = \left[ \frac{2n+1}{3} \right]$.

\begin{figure}[!htb]
	\centering
		\includegraphics[scale=0.7, trim=0cm 0cm 0cm 0cm]{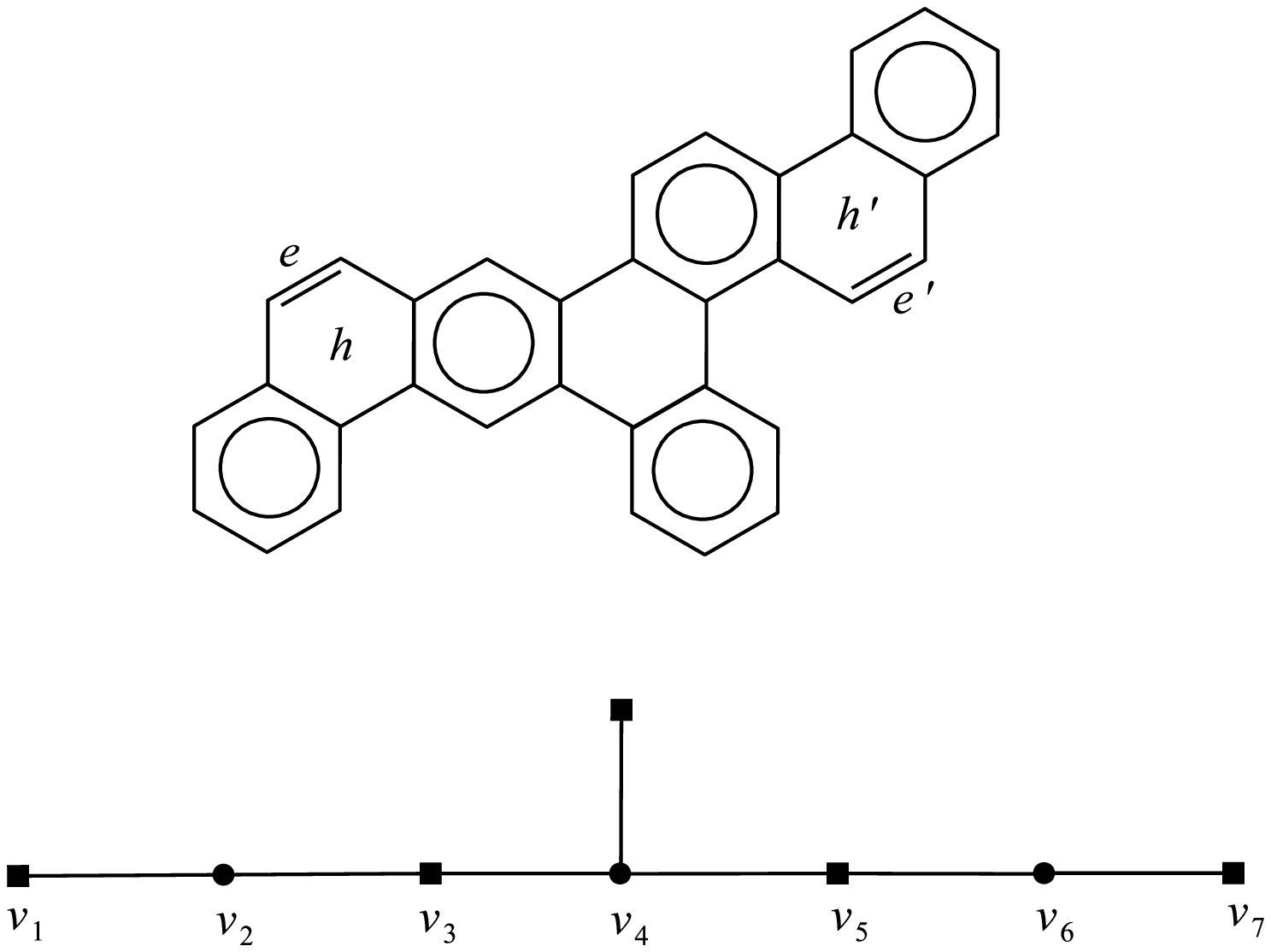}
\caption{Benzenoid graph with a perfect matching $M$ and the corresponding dualist tree $T_3$.}
	\label{ben_graf}
\end{figure}

To show the other direction, suppose that ${\Cl}(B) = \left[ \frac{2n+1}{3} \right]$. Hence, there is a Clar set $C$ for $B$ with exactly $\left[ \frac{2n+1}{3} \right]$ hexagons. The vertices of $T(B) \simeq T_k$ corresponding to the hexagons from $C$ form an independent set $I$ of $T_k$. Since $|I| = \left[ \frac{2n+1}{3} \right]$, set $I$ is uniquely defined by Lemma \ref{enolicno}. Let $M$ be a perfect matching for $B$ that gives Clar set $C$. Since $v_1, v_3 \in I$, the hexagons corresponding to the vertices $v_1$ and $v_3$ are $M$-alternating hexagons. If the hexagon corresponding to vertex $v_2$ is not angular, there are two vertices in this hexagon that are not incident with an edge from $M$, which is a contradiction. Therefore, the hexagon corresponding to $v_2$ is angular. In a similar way we can show that the hexagon corresponding to $v_{2k}$ is also angular and the proof is complete. \qed 
\end{proof}

Next, we define family $\mathcal{B}$ of catacondensed benzenoid graphs as follows:

\begin{itemize}
\item [$(i)$] The benzenoid graph with one hexagon belongs to $\mathcal{B}$ and the benzenoid graph with two hexagons belongs to $\mathcal{B}$. If $B_1$ is a catacondensed benzenoid graph with three hexagons such that the hexagon adjacent to two other hexagons is angular, then $B_1$ belongs to   $\mathcal{B}$ (see Figure \ref{grafi}).

\begin{figure}[!htb]
	\centering
		\includegraphics[scale=0.7, trim=0cm 0cm 0cm 0cm]{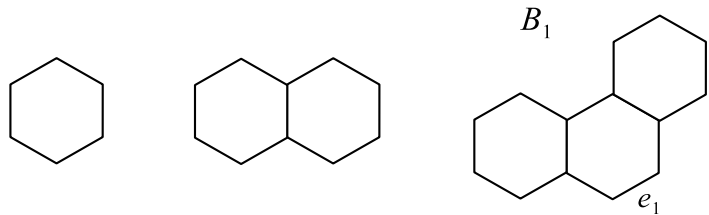}
\caption{Benzenoid graphs from case $(i)$.}
	\label{grafi}
\end{figure}

\item [$(ii)$] Let $B$ be a catacondensed benzenoid graph such that $T(B) \simeq T_k$ for some $k \geq 2$ and such that the two hexagons corresponding to vertices $v_2$ and $v_{2k}$ are both angular. Then $B$ belongs to $\mathcal{B}$.
\item [$(iii)$] Let $B'$ be a catacondensed benzenoid graph from $\mathcal{B}$ and let $e'$ be any edge of $B'$ with both end-vertices of degree 2. Moreover, let $B_1$ be the benzenoid graph from Figure \ref{grafi} and let $e_1$ be the edge of the angular hexagon with both end-vertices of degree 2. We define $B$ to be a benzenoid graph obtained from $B'$ and $B_1$ by identifying edges $e'$ and $e_1$ (see Figure \ref{lepljenje}). Then $B$ belongs to $\mathcal{B}$.

\begin{figure}[!htb]
	\centering
		\includegraphics[scale=0.7, trim=0cm 0cm 0cm 0cm]{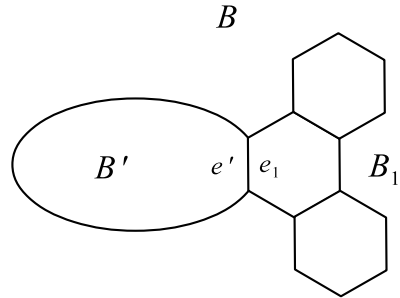}
\caption{Benzenoid graph $B$ obtained from $B' \in \mathcal{B}$ and $B_1$.}
	\label{lepljenje}
\end{figure}

\end{itemize}

\noindent
We notice that family $\mathcal{B}$ is defined inductively. Cases $(i)$ and $(ii)$ represent the basis and Case $(iii)$ represents the inductive step. It is easy to observe that $\mathcal{B}$ contains only catacondensed benzenoid graphs. Finally, we are able to prove the main result of this paper.

\begin{theorem}  \label{gl_iz}
Let $B$ be a catacondensed benzenoid graph with $n$ hexagons. Then
$${\Cl}(B) \leq \left[ \frac{2n+1}{3} \right]$$
and equality holds if and only if $B$ belongs to $\mathcal{B}$.
\end{theorem}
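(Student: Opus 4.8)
The inequality is precisely Lemma~\ref{zg_clar}, so the real work is the characterisation of equality, and the plan is to prove by induction on the number $n$ of hexagons that ${\Cl}(B) = \left[ \frac{2n+1}{3} \right]$ holds if and only if $B \in \mathcal{B}$. For the base cases $n \leq 3$ one argues directly: a benzenoid graph with one or two hexagons always attains the bound and always lies in $\mathcal{B}$, while a catacondensed benzenoid graph with three hexagons attains the bound (that is, has ${\Cl}(B) = 2$) exactly when its middle hexagon is angular --- if that hexagon is linear then the two end hexagons cannot be simultaneously $M$-alternating for any perfect matching $M$, so ${\Cl}(B) = 1$ --- and the angular case is exactly the graph $B_1$ of case~$(i)$.

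For the inductive step take $n \geq 4$. First consider the implication $B \in \mathcal{B} \Rightarrow {\Cl}(B) = \left[ \frac{2n+1}{3} \right]$. If $B$ entered $\mathcal{B}$ through case~$(ii)$ this is immediate from Lemma~\ref{pom1}. If it entered through case~$(iii)$, then $B$ arises from some $B' \in \mathcal{B}$ with $n' = n-3$ hexagons by gluing a copy of $B_1$ along a free edge $e'$ of $B'$, and the key claim is ${\Cl}(B) = {\Cl}(B') + 2$. For the lower bound I would take a perfect matching $M'$ of $B'$ giving a Clar set $C'$ and extend it to a perfect matching $M$ of $B$ by adding, for each of the two pendant hexagons $h_1, h_2$ of the glued copy of $B_1$, the unique alternating triple of edges of that hexagon that contains its edge shared with the central hexagon; one checks that $M$ agrees with $M'$ on $B'$ (whether or not $e' \in M'$), so that $C' \cup \{h_1, h_2\}$ is an independent family of $M$-alternating hexagons and ${\Cl}(B) \geq {\Cl}(B') + 2$. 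Combined with Lemma~\ref{zg_clar} and the inductive hypothesis ${\Cl}(B') = \left[ \frac{2n'+1}{3} \right]$, this gives the desired equality.

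For the converse, suppose ${\Cl}(B) = \left[ \frac{2n+1}{3} \right]$ with $n \geq 4$. From ${\Cl}(B) \leq \alpha(T(B)) \leq \left[ \frac{2n+1}{3} \right]$ we get $\alpha(T(B)) = \left[ \frac{2n+1}{3} \right]$, so Theorem~\ref{glavna} applies: either $T(B) \simeq T_k$ for some $k \geq 2$, or $T(B)$ has a vertex $v$ adjacent to at least two leaves. In the first case Lemma~\ref{pom1} forces the hexagons corresponding to $v_2$ and $v_{2k}$ to be angular, so $B \in \mathcal{B}$ by case~$(ii)$. In the second case, let $h_a, h_b$ be the pendant hexagons of two leaf neighbours of $v$, let $h_v$ be the hexagon of $v$, and --- since $n \geq 4$ forces $v$ to have a third neighbour --- let $h_c$ be the hexagon of that third neighbour, sharing an edge $e$ with $h_v$. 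I would let $B'$ be the catacondensed benzenoid graph obtained from $B$ by deleting $h_a, h_b, h_v$; since a hexagon having three neighbouring hexagons in a catacondensed benzenoid graph must meet them along the three alternating edges, $B$ is precisely the graph obtained by gluing a copy of $B_1$ to $B'$ along the now-free edge $e$. It then remains to show ${\Cl}(B') \geq {\Cl}(B) - 2$: starting from a Clar set $C$ of $B$ given by a perfect matching $M$, the family $C'' = C \setminus \{h_a, h_b, h_v\}$ has $|C''| \geq {\Cl}(B) - 2$ (since $C$ meets the three hexagons $h_a, h_v, h_b$, which induce a path in $T(B)$, in at most two of them), and one shows that $M$ restricted to $B'$ is either already a perfect matching of $B'$ or misses exactly the two endpoints of $e$ --- in the latter situation $h_c \notin C$, so adding $e$ completes the restriction to a perfect matching of $B'$ without breaking any hexagon of $C''$. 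With Lemma~\ref{zg_clar} this yields ${\Cl}(B') = \left[ \frac{2(n-3)+1}{3} \right]$, so $B' \in \mathcal{B}$ by induction, and hence $B \in \mathcal{B}$ by case~$(iii)$.

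The main obstacle, in both directions, will be the two ``peeling'' estimates ${\Cl}(B) \geq {\Cl}(B') + 2$ and ${\Cl}(B) \leq {\Cl}(B') + 2$: each one amounts to a careful analysis of how a perfect matching of one of the two graphs does (or fails to) restrict to the other across the single identified edge $e$. The structural fact that makes this bookkeeping close up --- and that forces the peeled three-hexagon piece to be the angular graph $B_1$ of case~$(iii)$ rather than a linear triple --- is that a hexagon adjacent to three other hexagons in a catacondensed benzenoid graph meets them along alternating edges; checking that this peeling and the gluing of case~$(iii)$ are mutually inverse on $\mathcal{B}$ is the remaining point that needs attention.
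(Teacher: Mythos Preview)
Your proposal is correct and follows essentially the same strategy as the paper: the inequality via Lemma~\ref{zg_clar}, the forward direction by induction through case~$(iii)$, and the converse by applying Theorem~\ref{glavna} to $T(B)$ and peeling off a copy of $B_1$ at a vertex with two pendant leaves. The only noteworthy difference is in the converse ``peeling'' step: the paper first runs an explicit case analysis on which edges of the central hexagon lie in $M$ in order to force the two leaf hexagons into $C$, and then asserts (without further comment) that the restricted matching works on $B'$; you instead bound $|C''|\geq {\Cl}(B)-2$ directly from independence in $T(B)$ and handle the possibility that $M|_{B'}$ misses the endpoints of $e$ by a parity-and-local argument showing $h_c\notin C$ in that situation. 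Your treatment of that restriction is in fact more careful than the paper's, but the overall architecture is the same.
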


\begin{proof}
The inequality follows by Lemma \ref{zg_clar}. 

First, suppose that a benzenoid graph $B$ with $n$ hexagons belongs to $\mathcal{B}$. We will show that the Clar number of $B$ attains the upper bound. Consider the following cases:
\begin{itemize}
\item [$(i)$] If $B$ has one hexagon, then ${\Cl}(B)=1 = \left[ \frac{2 \cdot 1 + 1}{3}\right]$. If $B$ has two hexagons, then ${\Cl}(B)=1 = \left[ \frac{2 \cdot 2 + 1}{3}\right]$. Also, for $B_1$ it holds ${\Cl}(B_1)=2 = \left[ \frac{2 \cdot 3 + 1}{3}\right]$.
\item [$(ii)$]  If $B$ is a catacondensed benzenoid graph such that $T(B) \simeq T_k$ for some $k \geq 2$ and such that the two hexagons corresponding to vertices $v_2$ and $v_{2k}$ are both angular, then the Clar number of $B$ attains the upper bound by Lemma \ref{pom1}.
\item [$(iii)$] Let $B$ be obtained from $B' \in \mathcal{B}$ and $B_1$ by identifying edges $e'$ and $e_1$ (see Figure \ref{lepljenje}). We use induction to prove that the Clar number of $B$ attains the upper bound. The base step is proved in Case $(i)$ and Case $(ii)$. For the induction step, assume that $B'$ has $n'$ hexagons and that the Clar number of $B'$ attains the upper bound, i.e.\,${\Cl}(B') = \left[ \frac{2n'+1}{3} \right].$ Hence, let $M'$ be a perfect matching of $B'$ with ${\Cl}(B')$ independent $M'$-alternating hexagons. Moreover, let $M''$ be a perfect matching of $B_1$ with two independent $M''$-alternating hexagons. Obviously, edge $e_1$ of $B_1$ belongs to $M''$. Finally, we define $M = M' \cup (M'' \setminus \lbrace e_1 \rbrace)$. Obviously, $M$ is a perfect matching of $B$ with exactly ${\Cl}(B')+2$ independent $M$-alternating hexagons. Hence, we obtain $${\Cl}(B) \geq {\Cl}(B') + 2 = \left[ \frac{2n'+1}{3}  \right] + 2 = \left[ \frac{2(n'+3) + 1}{3}\right].$$
Since $B$ has $n'+3$ hexagons, by Lemma \ref{zg_clar} we get
$${\Cl}(B) = \left[ \frac{2(n'+3) + 1}{3}\right].$$
\end{itemize}
\noindent
We have shown that if $B$ belongs to $\mathcal{B}$, then the Clar number of $B$ attains the upper bound.

For the other direction, suppose that $B$ has $n$ hexagons and that ${\Cl}(B) = \left[ \frac{2n+1}{3} \right]$. We have to prove that $B$ belongs to $\mathcal{B}$. Let $C$ be a Clar set of $B$ and let $I$ be the set of vertices in $T(B)$ that correspond to the hexagons from $C$. Obviously, by Lemma \ref{1.istvan}, $\alpha(T(B)) = |I| = \left[ \frac{2n+1}{3} \right]$. We consider two cases:
\begin{itemize}
\item [(a)] \textit{$T(B)$ does not have a vertex that is adjacent to (at least) two leaves.} By Theorem \ref{glavna}, $T(B)$ has at most two vertices or $T(B) \simeq T_k$ for some $k \geq 2$. In the last case, by Lemma \ref{pom1}, the two hexagons corresponding to vertices $v_2$ and $v_{2k}$ are both angular. In both cases, $B$ belongs to $\mathcal{B}$.
\item [(b)]\textit{ $T(B)$ has a vertex that is adjacent to (at least) two leaves.} If $T(B)$ has three vertices, then $B$ has three hexagons and the hexagon adjacent to two other hexagons is angular (otherwise the Clar number does not attain the upper bound). Therefore, $B=B_1$ and $B$ belongs to $\mathcal{B}$.

Now, suppose that $T(B)$ has more than three vertices. Then we have the situation from Figure \ref{odstrani}. Let $M$ be a perfect matching of $B$ that gives the Clar set $C$. We will show that $h_2,h_3 \in C$. We will also denote by $B_1$ the subgraph of $B$ composed of hexagons $h_1$, $h_2$, and $h_3$. Consider the following cases:

\begin{figure}[!htb]
	\centering
		\includegraphics[scale=0.7, trim=0cm 0cm 0cm 0cm]{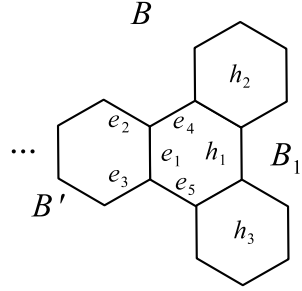}
\caption{Benzenoid graph $B$ with edges $e_1,e_2,e_3,e_4$, and $e_5$.}
	\label{odstrani}
\end{figure}

\begin{itemize}
\item [1.] If $e_2,e_5 \in M$ or $e_3,e_4 \in M$, then we can easily see that $M$ can not be a perfect matching, so this case can not happen.
\item [2.] If $e_4,e_5 \in M$, then hexagons $h_2,h_3$ are not $M$-alternating and perfect matching $M$ can be changed to $\overline{M}$ (in subgraph $B_1$) so that $h_2,h_3$ are $\overline{M}$-alternating. Therefore, $\overline{C} = (C \setminus \lbrace h_1\rbrace) \cup \lbrace h_2,h_3\rbrace $ is a set of independent $\overline{M}$-alternating hexagons and $|\overline{C}| > |C|$, which is a contradiction.
\item [3.] If $e_1 \in M$ or $e_2,e_3 \in M$, then $h_2,h_3$ are $M$-alternating and $h_2,h_3 \in C$ (otherwise $C$ is not a Clar set).
\end{itemize}
Since cases 1.\,and 2.\,can not happen, it follows that $h_2,h_3$ are $M$-alternating and $h_2,h_3 \in C$. We define $B'$ to be the graph obtained from $B$ by removing hexagons $h_2$ and $h_3$ and all the edges incident to $h_2$ or $h_3$. Moreover, let $M' = M \cap E(B')$. Obviously, $C'=C \setminus \lbrace h_2, h_3\rbrace $ is a set of independent $M'$-alternating hexagons and therefore,
$${\Cl}(B') \geq \left[ \frac{2n+1}{3} \right] - 2 = \left[ \frac{2(n-3)+1}{3} \right].$$
We know that $B'$ has exactly $n-3$ hexagons and hence, by Lemma \ref{zg_clar}, ${\Cl}(B') = \left[ \frac{2(n-3)+1}{3} \right]$.
If $T(B')$ does not have a vertex that is adjacent to two leaves, then by Case (a) $B'$ belongs to $\mathcal{B}$ and therefore, by definition of $\mathcal{B}$, also $B$ belongs to $\mathcal{B}$.
If $T(B')$ has a vertex that is adjacent to two leaves, then we can repeat the above procedure (at every step we remove 3 hexagons) until we get $B''$ such that $T(B'')$ does not have a vertex that is adjacent to two leaves. Again, it follows that $B$ belongs to $\mathcal{B}$.

\end{itemize}
Since in every case we get that $B$ belongs to $\mathcal{B}$, the proof is finished. \qed
\end{proof}

We conclude this section with two additional results.
\begin{lemma} \label{pom2}
Let $B$ be a catacondensed benzenoid graph and $h_0$ a hexagon that is adjacent to at most one other hexagon. Moreover, let $B'$ be a benzenoid graph obtained from $B$ by adding $k$, $k \geq 1$, linearly connected hexagons to $h_0$ such that $h_0$ is also linearly connected (or adjacent only to $h_1$), see Figure \ref{vstav}. Then ${\Cl}(B')={\Cl}(B)$.
\end{lemma}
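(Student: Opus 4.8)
The statement to prove is Lemma~\ref{pom2}: inserting a chain of $k \geq 1$ linearly connected hexagons onto a hexagon $h_0$ that is adjacent to at most one other hexagon (so that $h_0$ itself becomes linear, or stays pendant if it had no neighbour) does not change the Clar number. The plan is to prove the two inequalities ${\Cl}(B') \leq {\Cl}(B)$ and ${\Cl}(B') \geq {\Cl}(B)$ separately, the first by a matching/hexagon-exchange argument and the second by a direct extension of a perfect matching.

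\textbf{Step 1: ${\Cl}(B') \geq {\Cl}(B)$.} This is the easy direction. Take a Clar set $C$ of $B$ given by a perfect matching $M$. The newly added linear chain $h_1, \ldots, h_k$ (together with the ``linearizing'' edge adjustments at $h_0$) forms a region whose vertices not already covered by $M$ can be perfectly matched by the obvious ``vertical'' matching of a linear polyacene strip; crucially, a linear chain admits a perfect matching using none of its hexagons as alternating hexagons and leaving the shared edge with $h_0$ unused, so $M$ extends to a perfect matching $M'$ of $B'$ without disturbing the alternating status of any hexagon in $C$. Hence $C$ is still a set of independent $M'$-alternating hexagons in $B'$, giving ${\Cl}(B') \geq |C| = {\Cl}(B)$. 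One must check the parity/feasibility of the strip extension, but this is a routine finite check on the linear chain.

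\textbf{Step 2: ${\Cl}(B') \leq {\Cl}(B)$.} This is the main obstacle. Let $C'$ be a Clar set of $B'$ given by a perfect matching $M'$. The idea is to show that we may assume none of $h_1, \ldots, h_k$ lies in $C'$, and then restrict. First, among all Clar sets and matchings realizing ${\Cl}(B')$, pick one minimizing the number of chain hexagons in $C'$. If some $h_i$ ($1 \le i \le k$) is in $C'$, analyze the local structure: since the $h_i$ are linearly connected, an $M'$-alternating $h_i$ forces the matching on the two ``rungs'' of $h_i$, and by an alternating-cycle (rotation) argument along the linear strip one can push the circle out of the chain — replacing $h_i$ in $C'$ by a hexagon of $B$ (or simply deleting it without loss, using that the chain contributes a ``free'' perfect matching) — contradicting minimality or at worst not decreasing $|C'|$. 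Once no chain hexagon is in $C'$, the restriction $M' \cap E(B)$ need not be a perfect matching of $B$ on the nose (the edge $h_0$ shares with $h_1$ may be used by $M'$), so one argues: either that shared edge is unused by $M'$, and then $M' \cap E(B)$ is a perfect matching of $B$ making every hexagon of $C'$ alternating; or it is used, and then one reroutes along the chain (which is perfectly matchable in a way compatible with leaving that edge free) to reach the previous case. In all cases $C'$ becomes a set of independent alternating hexagons of $B$ with respect to a perfect matching of $B$, so ${\Cl}(B) \geq |C'| = {\Cl}(B')$.

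\textbf{Remark on the hard part.} The delicate point is the exchange argument in Step 2 showing a Clar set of $B'$ can be chosen avoiding the inserted linear chain; this requires carefully tracking how an $M'$-alternating hexagon inside a linear strip constrains the matching on neighbouring hexagons and performing an alternating-path flip that terminates at $h_0$ without creating new obstructions. The rest reduces to standard facts about perfect matchings of linear polyacene fragments. I would organize the write-up as: (1) describe the canonical perfect matching of a linear chain and its interaction with $h_0$; (2) prove $\geq$; (3) prove the chain-avoidance claim; (4) conclude $\leq$.
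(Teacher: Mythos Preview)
Your overall strategy is correct and matches the paper's in spirit, but Step~2 is considerably more elaborate than it needs to be, and the extra machinery obscures the one-line observation that does all the work.

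The paper's argument for ${\Cl}(B') \leq {\Cl}(B)$ is this: because $h_0, h_1, \ldots, h_k$ form a \emph{linear} chain, \emph{at most one} of these $k+1$ hexagons can belong to any Clar set $C'$ of $B'$ (two independent alternating hexagons in a linear strip would leave an odd fragment between them unmatched). So there is no need for a minimisation over Clar sets or an iterated exchange: either no $h_i$ with $i\ge 1$ is in $C'$, in which case $C'\subseteq B$ already; or exactly one such $h_i$ is in $C'$, and a single slide along the strip replaces it by $h_0$ to obtain a Clar set $C''$ of $B'$ of the same size with $C''\subseteq B$. In either case the resulting set is a Clar set for $B$ and we are done.

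Your version recovers the same conclusion, but the ``pick $C'$ minimising the number of chain hexagons'' device and the separate reroute-the-shared-edge case analysis are unnecessary once you have the at-most-one observation. Also, your parenthetical ``or simply deleting it without loss'' is not right as stated: deleting the chain hexagon from $C'$ would drop $|C'|$ by one and no longer witness ${\Cl}(B')$; you must replace it (by $h_0$), not delete it. I would rewrite Step~2 around the single sentence ``at most one of $h_0,\ldots,h_k$ lies in $C'$'' and drop the minimisation and the matching-restriction worries.
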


\begin{proof}
Denote the added hexagons by $h_1, \ldots, h_k$ and let $C$ be a Clar set for $B$. See Figure \ref{vstav}.

\begin{figure}[!htb]
	\centering
		\includegraphics[scale=0.7, trim=0cm 0cm 0cm 0cm]{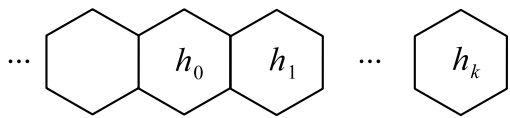}
\caption{Adding hexagons $h_1, \ldots, h_k$.}
	\label{vstav}
\end{figure}

Obviously, there is a perfect matching $M'$ of $B'$ such that $C$ is an independent set of $M'$-alternating hexagons. Therefore, it follows ${\Cl}(B') \geq {\Cl}(B)$.

To show the equality, let $C'$ be a Clar set for $B'$. Since hexagons $h_0,h_1,\ldots,h_k$ are linearly connected, at most one of these hexagons belongs to $C'$. Consider two cases.
\begin{itemize}
\item If one of hexagons $h_1,\ldots,h_k$ belongs to $C'$, then it is possible to find a Clar set $C''$ for $B'$ such that $h_0 \in C''$. Obviously, $C''$ is also a Clar set for $B$ and ${\Cl}(B')={\Cl}(B)$.
\item If none of hexagons $h_1,\ldots,h_k$ belongs to $C'$, then $C'$ is also a Clar set for $B$ and ${\Cl}(B')={\Cl}(B)$.
\end{itemize}
In both cases we get ${\Cl}(B')={\Cl}(B)$ and the proof is complete. \qed
\end{proof}

\noindent
Next result follows by Lemma \ref{pom2} and the main theorem.

\begin{theorem} Let $n$ be a positive integer. Then for every $c \in \left \lbrace 1,2,\ldots,\left[ \frac{2n+1}{3} \right] \right \rbrace$ there exists
a catacondensed benzenoid graph $B$ with $n$ hexagons and with Clar number ${\Cl}(B)=c$.
\end{theorem}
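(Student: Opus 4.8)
The plan is to realize every value $c$ in the stated range by a single explicit construction that interpolates between a graph achieving the maximum Clar number and a graph with Clar number $1$, using Lemma~\ref{pom2} as the engine that keeps the Clar number fixed while we pad the graph out to exactly $n$ hexagons. First I would fix $c$ and take, as a seed, a catacondensed benzenoid graph $B_0$ with $c$ hexagons and Clar number exactly $c$; concretely, one may take the graph whose dualist tree is a path on $c$ hexagons in which every hexagon adjacent to two others is angular (a ``fibonaccene''-type zig-zag chain), or more simply the member of $\mathcal{B}$ of type $(ii)$ when that is applicable. The point is only that for each $c\ge 1$ there is \emph{some} catacondensed $B_0$ with $c$ hexagons and ${\Cl}(B_0)=c$; for $c=1$ a single hexagon works, for $c=2$ any two fused hexagons, and for $c\ge 3$ the angular chain.

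Next I would enlarge $B_0$ to have exactly $n$ hexagons without changing the Clar number. Since $c\le \left[\frac{2n+1}{3}\right]$ we certainly have $c\le n$, so we must add $n-c\ge 0$ further hexagons. Pick a hexagon $h_0$ of $B_0$ that is a terminal hexagon (adjacent to at most one other hexagon) — such a hexagon exists because the dualist tree is a tree and hence has a leaf. Apply Lemma~\ref{pom2} with $k=n-c$: attach $n-c$ new linearly connected hexagons in a straight line off $h_0$, arranged so that $h_0$ itself becomes linearly connected (or stays adjacent only to its one neighbour). Lemma~\ref{pom2} gives at once that the resulting graph $B$ is catacondensed, has $c+(n-c)=n$ hexagons, and satisfies ${\Cl}(B)={\Cl}(B_0)=c$. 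If $n=c$ there is nothing to add and $B=B_0$ already works.

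The only genuine content is therefore the seed: that for every $c\ge 1$ there is a catacondensed benzenoid graph with $c$ hexagons whose Clar number equals $c$. For $c\le 2$ this is trivial. For $c\ge 3$, one takes the angular ($\mathrm{zig}$-$\mathrm{zag}$) chain $Z_c$ on $c$ hexagons; its dualist tree is a path, and the perfect matching that makes every hexagon alternating simultaneously is the standard one (alternately choosing the two ``parallel'' edges in each angular hexagon), so ${\Cl}(Z_c)=c$. Since $c\le \left[\frac{2c+1}{3}\right]$ fails for $c\ge 3$, note that this does \emph{not} contradict Lemma~\ref{zg_clar}: we are not claiming $Z_c$ attains the bound $\left[\frac{2c+1}{3}\right]$ for $c$ hexagons, only that after padding to $n$ hexagons the Clar number is still $c$, and $c\le\left[\frac{2n+1}{3}\right]$ is exactly our hypothesis. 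Actually a cleaner seed avoiding any subtlety: start from the member of $\mathcal{B}$ with $n$ hexagons (Clar number $\left[\frac{2n+1}{3}\right]$) and repeatedly convert an angular hexagon into a linear one, which by an argument analogous to Lemma~\ref{pom2} decreases the Clar number by at most one at each step while keeping $n$ fixed, until the Clar number reaches~$c$; the intermediate value theorem on integers then yields every $c$ between $1$ and $\left[\frac{2n+1}{3}\right]$.

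The main obstacle I anticipate is making the ``decrease by at most one'' step rigorous in the second approach, or — in the first approach — verifying cleanly that the angular chain $Z_c$ really does have Clar number $c$ and that the padding in Lemma~\ref{pom2} can always be carried out at a terminal hexagon in the prescribed linear fashion regardless of the local geometry of $B_0$. Both are routine, but the bookkeeping about which hexagon of the seed is terminal and linearly extendable is the one place where care is needed; everything else is a direct appeal to Lemma~\ref{pom2} and Lemma~\ref{zg_clar}.
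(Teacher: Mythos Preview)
Your first approach contains a genuine error: there is no catacondensed benzenoid on $c$ hexagons with Clar number $c$ once $c\ge 2$. Indeed Lemma~\ref{zg_clar} gives ${\Cl}(B_0)\le \left[\tfrac{2c+1}{3}\right]<c$ for any such $B_0$. Your claim that the zig--zag chain $Z_c$ has ${\Cl}(Z_c)=c$ confuses the Fries number with the Clar number: a perfect matching can make every hexagon of $Z_c$ alternating, but a Clar set must consist of pairwise \emph{disjoint} hexagons, and consecutive hexagons in any chain share an edge. (Already for $c=2$ your seed ``any two fused hexagons'' has Clar number $1$, not $2$.) Since the seed does not exist, the padding step never gets off the ground. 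Your second approach is only a sketch and, as you note yourself, the ``decrease by at most one'' step is unproven; moreover, graphs in $\mathcal{B}$ built via case~$(iii)$ contain hexagons of degree~$3$ in the dualist tree, which are neither angular nor linear, so it is unclear you can reach Clar number $1$ by angular-to-linear flips alone.

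The paper's proof uses the same padding idea (Lemma~\ref{pom2}) but chooses the seed correctly: for $c<\left[\tfrac{2n+1}{3}\right]$ it picks $n'$ with $c=\left[\tfrac{2n'+1}{3}\right]$, takes $B'\in\mathcal{B}$ on $n'$ hexagons (which exists and has ${\Cl}(B')=c$ by Theorem~\ref{gl_iz}), checks arithmetically that $n'<n$, and then appends $n-n'$ linear hexagons at a terminal hexagon. In other words, the seed should attain \emph{its own} upper bound $\left[\tfrac{2n'+1}{3}\right]$, not have Clar number equal to its hexagon count; once you make that correction your argument becomes the paper's.
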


\begin{proof}
Let $n$ be a positive integer and  $c \in \left \lbrace 1,2,\ldots,\left[ \frac{2n+1}{3} \right] \right \rbrace$. If $c = \left[ \frac{2n+1}{3} \right]$, then such a graph exists by Theorem \ref{gl_iz}. Now suppose that $c \leq \left[ \frac{2n+1}{3} \right] - 1$. Then we obtain $c \leq \frac{2n-2}{3}$.

Let $B'$ be a catacondensed benzenoid graph with $n'$ hexagons such that $c= \left[ \frac{2n'+1}{3} \right]$. Hence, $c > \frac{2n'-2}{3}$ and $n' < \frac{3c+2}{2}$. Finally, we get
$$n' <  \frac{3c+2}{2} \leq \frac{3 \cdot \frac{2n-2}{3} + 2}{2} = n.$$

\noindent
Therefore, we add $n-n'$ linearly connected hexagons to one hexagon (which corresponds to a vertex of degree at most one in $T(B')$) of $B'$ to obtain $B$. Obviously, $B$ has $n$ hexagons and by Lemma \ref{pom2}, ${\Cl}(B)=c$. \qed
\end{proof}

\section*{Acknowledgement}

\noindent The author Istv\'{a}n Est\'{e}lyi was supported by the Czech Ministry of Education, Youth and
Sports (project LO1506). The author Riste \v Skrekovski acknowledge the financial support from the Slovenian Research Agency (research core funding No. P1-0383). The author Niko Tratnik was finacially supported by the Slovenian Research Agency.

\baselineskip=16pt


\begin{thebibliography}{99}


%
%
%
%
%
%
%
%
%

\bibitem{abeledo} H. Abeledo, G. W. Atkinson, Unimodularity of the Clar number problem, \textit{Linear Algebra Appl.} \textbf{420} (2007) 441--448.

\bibitem{ahmadi} M. B. Ahmadi, E. Farhadi, V. Amiri Khorasani, On computing the Clar number of a fullerene using
optimization techniques, \textit{MATCH Commun. Math. Comput. Chem.} \textbf{75} (2016) 695--701.

\bibitem{ahmadi2} M. B. Ahmadi, V. A. Khorasani, E. Farhadi, Saturation number of fullerene and benzenoid graphs, \textit{MATCH Commun. Math. Comput. Chem.} \textbf{77} (2017) 737--747.

\bibitem{clar} E. Clar, \textit{The Aromatic Sextet}, John Wiley \& Sons, London, 1972.
	
\bibitem{hos2} R. Cruz, C. A. Marin, J. Rada, Computing the Hosoya index of catacondensed hexagonal systems, \textit{MATCH Commun. Math. Comput. Chem.} \textbf{77} (2017) 749--764.


\bibitem{DGKZ-2002}
	A.~A.~Dobrynin, I.~Gutman, S.~Klav\v zar, P.~\v Zigert,
	Wiener index of hexagonal systems,
	\textit{Acta Appl. Math.} \textbf{72} (2002) 247--294.
	
\bibitem{dos} T. Dosli\' c, N. Tratnik, D. Ye, P. \v Zigert Pleter\v sek, On 2-cores of resonance graphs of fullerenes, \textit{MATCH Commun. Math. Comput. Chem.} \textbf{77} (2017) 729--736.

\bibitem{gallai} T. Gallai, {\"U}ber extreme Punkt- und Kantenmengen, 
\textit{Ann. Univ. Sci. Budapest. E{\"o}tv{\"o}s Sect. Math.} \textbf{2} (1959) 133--138.

\bibitem{gra1} J. E. Graver, E. J. Hartung, Internal Kekul\' e structures for graphene and general patches, \textit{MATCH Commun. Math. Comput. Chem.} \textbf{76} (2016) 693--705.

\bibitem{gra} J. E. Graver, E. J. Hartung, A. Y. Souid, Clar and Fries numbers for benzenoids, \textit{J. Math. Chem.} \textbf{51} (2013) 1981--1989.

\bibitem{gucy-89}
I.  Gutman, S.~J. Cyvin. 
{\it Introduction to the Theory of Benzenoid Hydrocarbons},
Springer-Verlag, Berlin, 1989.

\bibitem{ha} P. Hansen, M. Zheng, The Clar number of a benzenoid hydrocarbon and linear programming, \textit{J. Math. Chem.} \textbf{15} (1994) 93--107.

\bibitem{ha2} P. Hansen, M. Zheng, Upper bounds for the Clar number of a benzenoid hydrocarbon, \textit{J. Chem. Soc., Faraday Trans.} \textbf{88} (1992) 1621--1625.

\bibitem{har1} E. Hartung, Clar chains and a counterexample, \textit{J. Math. Chem.} \textbf{52} (2014) 990--1006.


\bibitem{har} E. Hartung, Fullerenes with complete Clar structure, \textit{Discrete Appl. Math.} \textbf{161} (2013) 2952--2957.
	
\bibitem{kl-zi-gu} S. Klav\v zar, P. \v Zigert, I. Gutman, Clar number of catacondensed benzenoid hydrocarbons, \textit{J. Mol. Struc. THEOCHEM} \textbf{586} (2002) 235--240.

\bibitem{sa-gu} K. Salem, I. Gutman, Clar number of hexagonal chains, \textit{Chem. Phys. Letters} \textbf{394} (2004) 283--286.

\bibitem{sa-kl} K. Salem, S. Klav\v zar, A. Vesel, P. \v Zigert, The Clar formulas of a benzenoid system and the resonance graph, \textit{Discrete Appl. Math.} \textbf{157} (2009) 2565--2569.

\bibitem{shi} L. Shi, H. Zhang, Forcing and anti-forcing numbers of $(3,6)$-Fullerenes, \textit{MATCH Commun. Math. Comput. Chem.} \textbf{76} (2016) 597--614.

\bibitem{ewi} Z. F. Wei, H. Zhang, Number of matchings of low order in $(4,6)$-fullerene graphs, \textit{MATCH Commun. Math. Comput. Chem.} \textbf{77} (2017) 707--724.

\bibitem{zhang_pol} H. A. Witek, J. Langner, G. Mos, C. P. Chou, Zhang-Zhang polynomials of regular 5-tier benzenoid strip, \textit{MATCH Commun. Math. Comput. Chem.} \textbf{78} (2017) 487--504.

\bibitem{ye-zh} D. Ye, H. Zhang, Extremal fullerene graphs with the maximum Clar number, \textit{Discrete Appl. Math.} \textbf{157} (2009) 3152--3173.

\bibitem{hos} J. Zhang, X. Chen, W. Sun, A linear-time algorithm for the Hosoya index of an arbitrary tree, \textit{MATCH Commun. Math. Comput. Chem.} \textbf{75} (2016) 703--714.

\bibitem{zh-ye} H. Zhang, D. Ye, An Upper Bound for the Clar Number of Fullerene Graphs, \textit{J. Math. Chem.} \textbf{41} (2007) 123--133.
  
  
%
%
%
%
%
%
%
%
%
%
%
%
%
%
%
%
%
%
%
%
%
%
%
%
%
%
%
%
%
    
\end{thebibliography}
\end{document}